\theoremstyle{plain}
\newtheorem{Th}{Theorem}[section]
\newtheorem{Lemma}[Th]{Lemma}
\newtheorem{Prop}[Th]{Proposition}
\theoremstyle{definition}
\newtheorem{Def}[Th]{Definition}
\newtheorem{Rem}[Th]{Remark}
\newtheorem{?}[Th]{Problem}
\newcommand{\diam}{{\rm{diam}}}
\newcommand{\dist}{\operatorname{dist}}
\newtheorem*{acknowledgement}{Acknowledgement}
\title[Equivalence of Collet--Eckmann conditions]{Equivalence of Collet--Eckmann conditions for slowly recurrent rational maps}
\author{Mats Bylund}
\address{Centre for Mathematical Sciences, Lund University, Box 118, 221 00 Lund, Sweden}
\email{mats.bylund@math.lth.se}
\subjclass[2010]{37F10, 37F15, 37B20}
\begin{document}

\begin{abstract}
In this short note we observe that within the family of slowly recurrent rational maps on the Riemann sphere, the Collet--Eckmann, second Collet--Eckmann, and topological Collet--Eckmann conditions are equivalent and also invariant under topological conjugacy.
\end{abstract}

\maketitle

\section{Introduction}
The Collet--Eckmann condition first appeared in the seminal papers by P.~Collet and J.-P.~Eckmann \cite{CE1,CE2} where they studied chaotic behaviour of certain non-uniformly expanding maps on the interval. This condition, which requires exponential growth of the derivative along the critical orbit(s), was later introduced in \cite{P1} to the study of holomorphic (rational) maps on the Riemann sphere. The Collet--Eckmann condition, which often implies the existence of absolutely continuous invariant measures with strong ergodic properties, is known to be abundant in both the real \cite{BC1,BC2,AM} and complex \cite{Rees, Asp} settings. A related and purely topological condition was introduced in \cite{PR1}, where it was proved to be implied by the Collet--Eckmann condition. Much work has been made to identify the relationships between the \emph{Collet--Eckmann condition} (abbr.~CE), the \emph{second Collet--Eckmann condition} (abbr.\ CE2), and the \emph{topological Collet--Eckmann condition} (abbr.\ TCE) (see below for definitions). Notably these conditions are known to be equivalent within the family of unicritical maps (see \cite{PR-LS} and references therein). In \cite{PR-LS} examples are given of maps which satisfy TCE but not CE and/or CE2, and maps which satisfy CE but not CE2, and vice versa. The main problem that arises is when critical points come close to other critical points of high multiplicity. By assuming a recurrence condition of the critical orbits, known as the \emph{slow recurrence condition} (abbr.\ SR), we observe in this note that these conditions become equivalent; in a sense slow recurrence takes the r\^ole of unicritical. The slow recurrence condition is defined as follows.

\begin{Def}
A rational map $f \colon \hat{\mathbb{C}} \to \hat{\mathbb{C}}$ of degree $\geq 2$ is said to satisfy the \emph{slow recurrence condition} if for each $\alpha > 0$ there exists $C > 0$ such that, for every critical point $c \in \operatorname{Crit}(f) \cap J(f)$,
\[
\operatorname{dist}\left(f^n (c), \operatorname{Crit}(f)  \cap J(f)\right) \geq Ce^{-\alpha n} \quad (n \geq 1).
\]
\end{Def}
\begin{Rem}
Note that if $f$ satisfies SR then no critical point is mapped onto another critical point.
\end{Rem}

The SR condition is generally believed to be a typical property among rational Collet--Eckmann maps, and it should be noted that within the real quadratic family this is known to be true due to a result by A.~Avila and C.~G.~Moreira \cite{AM}. In fact they proved that for a typical non-hyperbolic (the critical point does not tend to an attractive cycle) real quadratic map $F$ one has
\[
\operatorname{dist}(F^n (c),c) \geq \frac{C}{n^{1+\epsilon}} \quad (n \geq 1),
\]
for any $\epsilon > 0$ and $C = C(\epsilon) > 0$ a constant. Moreover, in the multimodal setting, B.~Gao and W.~Shen \cite{GS14} proved that for one-parameter families the slow recurrence condition is satisfied on a set of positive Lebesgue measure. We also mention that for complex unicritical polynomials $z \mapsto z^d + c$, it follows from a result by J.~Graczyk and G.~\'Swi\c atek \cite{GS15} that the slow recurrence condition is satisfied for a typical parameter $c$ with respect to harmonic measure on the boundary of the connectedness locus.

The SR condition is also natural in the sense that CE+SR is invariant under topological conjugacy, as was observed by H.~Li (Theorem~A.1 in \cite{Li}, see also \cite{LW06}). The short proof of the invariance is given at the end of this note.

The following is our main observation.
\begin{Prop}\label{mainProp}
Within the family of slowly recurrent rational maps of degree $\geq 2$ on the Riemann sphere, CE, CE2, and TCE are equivalent. Moreover, these conditions are invariant under topological conjugacy.
\end{Prop}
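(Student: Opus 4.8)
The plan is to reduce the statement to two implications and then invoke known equivalences. Recall that among all rational maps of degree $\geq 2$ one has the implications $\mathrm{CE}\Rightarrow\mathrm{TCE}$ \cite{PR1} and $\mathrm{CE2}\Rightarrow\mathrm{TCE}$, and, most importantly, that TCE is equivalent to the \emph{exponential shrinking of components} property (ExpShrink) and is invariant under topological conjugacy \cite{PR-LS}. ExpShrink asserts the existence of $\lambda>1$ and $r>0$ such that every connected component $W$ of $f^{-n}(B(x,r))$, with $x\in J(f)$, satisfies $\operatorname{diam}(W)\leq\lambda^{-n}$. Granting these facts, it suffices to prove, \emph{under the additional hypothesis SR}, the reverse implications $\mathrm{TCE}\Rightarrow\mathrm{CE}$ and $\mathrm{TCE}\Rightarrow\mathrm{CE2}$; the three conditions are then equivalent within the slowly recurrent family.

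For $\mathrm{TCE}\Rightarrow\mathrm{CE}$ I would fix a critical point $c\in\operatorname{Crit}(f)\cap J(f)$ and, for each $n$, pull the ball $B(f^{n+1}(c),r)$ back along the orbit $f(c),f^2(c),\dots,f^{n+1}(c)$, obtaining the component $W_n\ni f(c)$ of $f^{-n}\bigl(B(f^{n+1}(c),r)\bigr)$. ExpShrink gives $\operatorname{diam}(W_n)\leq\lambda^{-n}$, and the desired bound $|(f^n)'(f(c))|\gtrsim\lambda^n$ would follow by comparing $\operatorname{diam}(W_n)$ with $\operatorname{diam}B(f^{n+1}(c),r)$, \emph{provided} the map $f^n\colon W_n\to B(f^{n+1}(c),r)$ has subexponential distortion. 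The distortion is governed by the passages of the orbit of $c$ near $\operatorname{Crit}(f)$, and this is precisely where SR enters: since $\operatorname{dist}(f^i(c),\operatorname{Crit}(f)\cap J(f))\geq Ce^{-\alpha i}$, choosing $\alpha$ small forces the accumulated distortion to be $e^{o(n)}$, which is absorbed by the exponential shrinking. Thus $|(f^n)'(f(c))|\geq C'(\lambda')^n$ for some $\lambda'>1$, i.e.\ CE.

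The implication $\mathrm{TCE}\Rightarrow\mathrm{CE2}$ is handled analogously but along backward orbits: for $w\in f^{-n}(c)$ I would take the component $V\ni w$ of $f^{-n}(B(c,r))$, use ExpShrink to bound $\operatorname{diam}(V)\leq\lambda^{-n}$, and convert this into $|(f^n)'(w)|\gtrsim\lambda^n$ via distortion control of $f^n\colon V\to B(c,r)$. The subtlety here is that a backward orbit is not directly controlled by SR; however, whenever $f^j(w)$ comes within $\rho$ of a critical point $c'$, its image $f^n(w)=c$ lies correspondingly close to $f^{n-j}(c')$, and SR applied to the forward orbit of $c'$ bounds $\operatorname{dist}(f^{n-j}(c'),c)$ from below by $Ce^{-\alpha(n-j)}$. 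This again keeps the total distortion subexponential and yields CE2.

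Finally, for the topological invariance I would argue that, by the equivalence just established, the common condition coincides with $\mathrm{CE}+\mathrm{SR}$, which is invariant under topological conjugacy by Li's theorem \cite{Li}; alternatively one may appeal directly to the topological invariance of TCE \cite{PR-LS}. I expect the main obstacle to be the distortion estimate converting the purely metric ExpShrink into the derivative bounds CE and CE2: controlling the distortion of pullbacks that pass through neighbourhoods of critical points of high multiplicity is exactly the difficulty behind the known counterexamples, and the technical heart of the proof is to show that SR renders this distortion subexponential.
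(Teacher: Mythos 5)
Your decomposition is genuinely different from the paper's: the paper proves CE $\Leftrightarrow$ CE2 under SR directly, by modifying the (reversed) telescope construction of \cite{GS}, and only uses ExpShrink for the single implication TCE $\Rightarrow$ CE; you instead route everything through TCE, which obliges you to prove \emph{both} TCE+SR $\Rightarrow$ CE and TCE+SR $\Rightarrow$ CE2 from ExpShrink. That route is viable in principle, but in both implications your proposed mechanism for exploiting SR is the wrong one, and this is exactly where the argument would fail. For TCE $\Rightarrow$ CE you pull back the \emph{fixed-radius} ball $B(f^{n+1}(c),r)$ and claim SR keeps the accumulated distortion subexponential. The obstruction is not accumulated distortion but univalence: at shallow depth $n-i$ the pullback component containing $f^i(f(c))$ has diameter bounded only by $\lambda_{\operatorname{Exp}}^{-(n-i)}$, which for $i$ close to $n$ is of order $1$, while SR guarantees only $\operatorname{dist}(f^i(f(c)),\operatorname{Crit}(f))\gtrsim e^{-\alpha n}$ there. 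Hence critical points can lie \emph{inside} these components, $f^n$ need not be univalent on $W_n$, and no Koebe-type estimate converts the diameter ratio into a bound on $|(f^n)'(f(c))|$. The paper's proof (adapting Lemma~4.5 of \cite{P2}) handles precisely this by pulling back an \emph{exponentially small} ball: one first runs ExpShrink for $s\approx(2\alpha n+|\log\epsilon|)/\log\lambda_{\operatorname{Exp}}$ extra forward steps, so that every pullback component along the orbit has diameter $\leq\epsilon e^{-2\alpha n}$, i.e.\ below the SR threshold \emph{uniformly in the depth}; only then do all components avoid $\operatorname{Crit}(f)$ and distortion becomes available.

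Your TCE $\Rightarrow$ CE2 sketch has a parallel quantitative flaw. From ``$f^j(w)$ is $\rho$-close to $c'$'' you push the estimate forward to conclude that $c=f^n(w)$ is close to $f^{n-j}(c')$; but pushing distances forward loses a factor of order $(\sup_{\hat{\mathbb{C}}}|f'|)^{n-j}$, so the lower bound on $\rho$ you can extract from SR decays at rate $\alpha+\log\sup|f'|$, which no choice of small $\alpha$ makes small, and a single such passage already destroys the exponential gain $\lambda_{\operatorname{Exp}}^n$. The correct mechanism again works with components of an exponentially small ball $B(c,\rho_n)$, $\rho_n\sim e^{-\alpha n}$: if a critical point $c'$ lies in the depth-$(n-j)$ pullback component, then $f^{n-j}(c')\in B(c,\rho_n)$ \emph{exactly} (no expansion loss), contradicting SR; so all pullbacks are univalent and ExpShrink plus Koebe gives CE2. (The paper avoids this implication altogether, obtaining CE2 from CE via \cite{GS}, where SR enters instead as a lower bound on the radius $r_1$, respectively on $\operatorname{diam}(U_m)$, that multiplies the expansion factor with exponent $\mu_{\max}-\mu(c)$.) Finally, on invariance: appealing only to the topological invariance of TCE does not prove the paper's statement, since one must also show that the conjugate map is again SR (otherwise the equivalence cannot be applied to it); the paper does this, following \cite{Li}, by noting that CE+SR forces the conjugacy to be quasiconformal, hence bi-H\"older, by Theorem~A of \cite{PR2}, and bi-H\"older maps preserve exponential lower bounds. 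Citing Li's Theorem~A.1 outright is legitimate but is precisely the statement the paper sets out to reprove.
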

In \cite{PR-LS} examples of real polynomials of degree 5 that satisfy CE but not CE2 (and vice versa) are given, and also examples of real polynomials of degree 3 that satisfy TCE but neither CE nor CE2. We therefore conclude that none of these examples satisfy SR. 

We make a final remark that it would be interesting to investigate the set of rational maps satisfying TCE+SR. Indeed if almost every topological Collet--Eckmann map is slowly recurrent then TCE and CE are equivalent up to a set of measure zero.

Below we indicate the changes in three already existing lemmas in order to reach the above stated result of Proposition~\ref{mainProp}. For completeness we provide the minimal of definitions and proofs, but refer to the relevant articles for greater detail. Throughout this note the standing assumption is that $f$ is a slowly recurrent rational map on the Riemann sphere $\hat{\mathbb{C}}$ of degree $\geq 2$, and with $f^n$ we mean $f$ composed with itself $n$ times. We let $B(z,r) = \{w : \operatorname{dist}(z,w) < r\} \subset \hat{\mathbb{C}}$ denote the disk of radius $r > 0$ centred at $z$, and we let $\operatorname{Crit}'(f) = \operatorname{Crit}(f) \cap J(f)$ with $\operatorname{Crit}(f)$ the set of critical points of $f$, and $J(f)$ the Julia set of $f$. Distances, diameters, and derivatives are taken with respect to the spherical metric on $\hat{\mathbb{C}}$.

\section{Equivalence of CE+SR and CE2+SR}
The Collet--Eckmann condition and second Collet--Eckmann condition are defined as follows.

\begin{Def}
A rational map $f \colon \hat{\mathbb{C}} \to \hat{\mathbb{C}}$ of degree $\geq 2$ without parabolic periodic points is said to satisfy the \emph{Collet--Eckmann condition} (CE) if there exist constants $\lambda_1 > 1$ and $C_1 > 0$ such that, for each critical point $c \in \operatorname{Crit}'(f)$,
\[
\vert (f^n)^\prime (f(c)) \vert \geq C_1 \lambda_1^n \quad (n \geq 0).
\]
\end{Def}

\begin{Def}
A rational  map $f \colon \hat{\mathbb{C}} \to \hat{\mathbb{C}}$ of degree $\geq 2$ is said to satisfy the \emph{second Collet--Eckmann condition} (CE2) if there exist constants $\lambda_2 > 1$ and $C_2 > 0$ such that, for every $n \geq 1$ and every $w \in f^{-n}(c)$ for $c \in \operatorname{Crit}'(f)$ not in the forward orbit of other critical points, 
\[
\vert (f^n)^\prime(w) \vert \geq C_2 \lambda_2^n.
\]
\end{Def}
In \cite{GS} it was proved that these two conditions are equivalent for critical points of maximal (dynamical) multiplicity. This was achieved through the so-called \emph{(reversed) telescope construction}. At the heart of these techniques lies the \emph{shrinking neighbourhoods} (first introduced in \cite{P1}) which are defined as follows. Fix a decreasing sequence of positive real numbers $(\delta_n)$ satisfying $\prod_{n}(1-\delta_n) > 1/2$. Let $B_r = B(z,r)$, and consider a sequence $(f^{-n}(z))$ of consecutive preimages of $z$. With $\Delta_n = \prod_{k < n}(1-\delta_k)$, the $n^\text{th}$ shrinking neighbourhoods of $z$ are now defined as $U_n = \operatorname{Comp}_{f^{-n}(z)} f^{-n}B_{r\Delta_n}$ and $U_n' = \operatorname{Comp}_{f^{-n}(z)} f^{-n}B_{r\Delta_{n+1}}$. Here, $\operatorname{Comp}_w$ denotes the connected component containing $w$. With the right \emph{scale} around each critical point, using these shrinking neighbourhoods, one gets distortion and expansion estimates. The scale is defined by the choice of two positive numbers $R' \ll R \ll 1$, and the correct choice of these two numbers is crucial for the local analysis. We refer to \cite{GS} for details; for our purposes it is enough to keep in mind that these two numbers are fixed throughout the analysis.
\subsection{CE+SR\texorpdfstring{$\implies$}{text}CE2+SR}
Let $(c_{-k})_{k=1}^n$ be a sequence of consecutive preimages of $c_0 = c \in \operatorname{Crit}'(f)$ of length $n \geq 1$, i.e. $f(c_{-k}) = c_{-k+1}$ and $f^k(c_{-k}) = c$. In \cite{GS}, the authors inductively define an increasing sequence of numbers $0 = n_0 < n_1 < \cdots < n_m = n$, and each (backward) orbit of length $n_{k+1}-n_k$ is classified as either a \emph{type $1$}, \emph{type $2$}, or \emph{type $3$} orbit. For orbits of type $\dots 2 , \dots 3,$ or $1\dots 13$ (one reads from the right), one has exponential growth of the derivative (a $12$ block is not allowed by construction). The only problem thus arise when a given backward orbit begins with a block of $1$'s which is not preceded by a $3$. For clarity we give the definition of a type 1 orbit.
\begin{Def}
A sequence $z_0 = z, z_{-1} \in f^{-1}(z),\dots, z_{-n} \in f^{-n}(z)$ of consecutive preimages of $z$ is of the first type with respect to the critical points $c'$ and $c''$ if
\begin{enumerate}[1)]
\item Shrinking neighbourhoods $U_k$ for $B(z,r)$, $1 \leq k \leq n$, avoid critical points for some $r < 2R'$,
\item The critical point $c'' \in \partial U_n$,
\item The critical value of $c'$ is close to $z$ with $f(c') \in B(f(z),R)$.
\end{enumerate}
\end{Def}
The situation of having a block of $1$'s not preceded by a $3$ can only happen in the beginning, and given such a situation the authors prove that there is a constant $\lambda > 1$ such that
\begin{equation}\label{CECE2}
\vert (f^n)^\prime (c_{-n}) \vert^{\mu_{\max}} \geq \operatorname{const} \lambda^n r_1^{\mu_{\max}-\mu(c)},
\end{equation}
where $\mu(c)$ is the multiplicity of $c$, $\mu_{\max} = \max_{c\in \operatorname{Crit}'(f)} \mu(c)$, and $r_1 < 2R'$ is the radius of a disk centred at $c$. Here $r_1$ can not be chosen freely in order for the inductive definition of the $n_k$'s to work, thus for large $n$ (\ref{CECE2}) might not yield expansion. The authors assume $\mu(c) = \mu_{\max}$ and in doing so prove that CE implies CE2 for critical points of maximal multiplicity (Proposition~1 in \cite{GS}). If one assumes SR this problem is easily seen to vanish since the slow recurrence condition dictates how small $r_1$ can be.
\begin{Lemma}
If a slowly recurrent rational map $f \colon \hat{\mathbb{C}} \to \hat{\mathbb{C}}$ of degree $\geq 2$ satisfies CE then it satisfies CE2.
\end{Lemma}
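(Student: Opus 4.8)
The plan is to follow the telescope construction of \cite{GS} verbatim and to intervene only in the single configuration it leaves open. Given a backward orbit $(c_{-k})_{k=1}^n$ of consecutive preimages of $c_0 = c \in \operatorname{Crit}'(f)$, one decomposes it along the indices $0 = n_0 < n_1 < \cdots < n_m = n$ and classifies each segment as type $1$, $2$, or $3$. For every orbit whose (right-to-left) type sequence ends in a $2$, a $3$, or a block $1\cdots 13$, the estimates of \cite{GS} already furnish a uniform exponential lower bound on $\vert (f^n)'(c_{-n})\vert$, which is precisely the CE2 conclusion for the preimage $w = c_{-n}$. The whole problem therefore collapses to the remaining case, a leading block of $1$'s not preceded by a $3$, for which \cite{GS} only supply the weaker estimate \eqref{CECE2}.

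In \eqref{CECE2} the sole obstruction is the factor $r_1^{\mu_{\max}-\mu(c)}$. Since $\mu_{\max} - \mu(c) \geq 0$ and $r_1 < 2R' < 1$, this factor is at most $1$ and, a priori, could decay with $n$ rapidly enough to absorb the expansion $\lambda^n$; this is exactly why \cite{GS} imposed $\mu(c) = \mu_{\max}$, which annihilates the exponent. My plan is instead to keep the factor and to bound $r_1$ from below by means of slow recurrence. The key step is to show that the radius $r_1$ attached to the leading block of $1$'s is comparable to a closest-approach distance of the form $\dist(f^\ell(c'),\operatorname{Crit}'(f))$ for a critical point $c'$ feeding the block and a time $\ell \leq n$. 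Invoking SR for that critical point then produces, for any prescribed $\alpha > 0$, a constant $C > 0$ with $r_1 \geq C e^{-\alpha \ell} \geq C e^{-\alpha n}$.

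With this lower bound the conclusion is immediate. Substituting into \eqref{CECE2} and absorbing the harmless constant $C^{\mu_{\max}-\mu(c)}$ gives
\[
\vert (f^n)'(c_{-n})\vert^{\mu_{\max}} \geq \operatorname{const}\cdot \lambda^n \left(C e^{-\alpha n}\right)^{\mu_{\max}-\mu(c)} = \operatorname{const}\cdot \bigl(\lambda\, e^{-\alpha(\mu_{\max}-\mu(c))}\bigr)^n,
\]
and choosing $\alpha$ small enough that $\tilde\lambda := \lambda\, e^{-\alpha(\mu_{\max}-\mu(c))} > 1$ restores genuine exponential growth. Taking $\mu_{\max}$-th roots yields $\vert (f^n)'(c_{-n})\vert \geq \operatorname{const}\cdot \tilde\lambda^{\,n/\mu_{\max}}$, which is the CE2 bound with $\lambda_2 = \tilde\lambda^{1/\mu_{\max}}$. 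Because there are only finitely many critical points and every other type configuration has already been settled, the resulting constants can be taken uniform over all backward orbits, delivering CE2.

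The step I expect to be the main obstacle is the middle one: pinning down exactly what $r_1$ denotes inside the inductive scheme of \cite{GS} and certifying that the time index governing it does not exceed $n$, so that a single SR estimate with exponent $\alpha n$ (rather than something larger) suffices. Once $r_1 \geq C e^{-\alpha n}$ is secured with $\alpha$ at our disposal, everything else is routine bookkeeping.
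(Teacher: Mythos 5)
Your proposal is correct and takes essentially the same route as the paper: isolate the leading block of $1$'s, bound $r_1$ from below via SR, and absorb the resulting factor $(Ce^{-\alpha n})^{\mu_{\max}-\mu(c)}$ into the exponential growth by taking $\alpha$ small. The step you flag as the main obstacle is resolved exactly as you anticipate: by the definition of a type $1$ orbit there is a critical point $c'' \in \partial U_{n_1}$ with $f^{n_1}(c'') \in B(c,r_1)$, hence $r_1 \geq \dist(f^{n_1}(c''),c) \geq Ce^{-\alpha n_1} \geq Ce^{-\alpha n}$, since $n_1 \leq n$.
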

\begin{proof}
Suppose the situation is as described above, and let $n_1$ be the length of the first type $1$ orbit. Per definition of a type $1$ orbit there exists a critical point $c'' \in \partial U_{n_1}$ which is mapped into $B(c,r_1)$. From SR we get that
\[
r_1 \geq \operatorname{dist}(f^{n_1}(c''),c) \geq C e^{-\alpha n_1}.
\]
Since $n_1 \leq n$, inserting the above in (\ref{CECE2}) we find that
\[
\vert (f^n)^\prime (c_{-n}) \vert^{\mu_{\max}} \geq \operatorname{const} \lambda^n \left(Ce^{-\alpha n}\right)^{\mu_{\max} - \mu(c)} \geq C_2 \lambda_2^n,
\]
where we can make $\lambda_2$ arbitrarily close to $\lambda$ by decreasing $\alpha$ (and thus also decreasing $C_2$).
\end{proof}

\subsection{CE2+SR\texorpdfstring{$\implies$}{text}CE+SR}
Pick $c \in \operatorname{Crit}'(f)$, fix $n$, and consider a sequence of images $z_0 = f^n(f(c)), z_{-1} = f^{n-1}(f(c)),\dots, z_{-(n+1)} = c$. Similarly as in the previous case, the authors inductively define an increasing subsequence $n_0 < n_1 < \dots < n_m = n$. Here $n_0$ is the smallest positive integer such that $z_{-(n_0 + 1)}$ is in the $R$-neighbourhood of some critical point. Due to the \emph{exponential shrinking of components} (see below for a definition), which is implied by CE2 (see \cite{PR-LS}), one can prove that during this last orbit of length $n_0$ one has expansion. (In \cite{GS} \emph{R-expansion} was taken as an assumption.) The conditions imposed on $n_j$, $j \neq 0$, are as follows:
\begin{enumerate}[I)]
\item The sequence $z_{-n_{j-1}},\dots,z_{-n_j}$ is of the \emph{first reversed type},
\item Some critical point $c^{(j)} \in B(z_{-(n_j+1)},R)$.
\end{enumerate}
The definition of a \emph{first reversed type} orbit is as follows.
\begin{Def}
A sequence $z_0 = z, z_{-1} \in f^{-1}(z),\dots, z_{-n} \in f^{-n}(z)$ of consecutive preimages of $z$ is of the reversed first type with respect to two critical points $c'$ and $c''$ if
\begin{enumerate}[1)]
\item Shrinking neighbourhoods $U_k$ for $B(z_{-1},r)$, $1 \leq k \leq n-1$, avoid critical points,
\item $\operatorname{dist}(z_{-1},c') = r/2 < R$,
\item $c'' \in U_n$.
\end{enumerate}
\end{Def}
The authors continue and prove (Proposition~5 in \cite{GS}) that there is a constant $\lambda > 1$ such that 
\begin{equation}\label{CE2CE}
\vert (f^n)^\prime(f(c))\vert \geq \operatorname{const} \lambda^n \left(\operatorname{diam}(U_m)\right)^{\mu_{\max} - \mu(c)}.
\end{equation}
Here $\operatorname{diam}(U_m)$ is the diameter of a shrinking neighbourhood around $c$. As in the previous case, this factor might interfere with expansion for large $n$ unless $c$ is assumed to be a critical point of maximal multiplicity $\mu(c) = \mu_{\max}$. Again, assuming SR, we get a lower bound for the diameter.
\begin{Lemma}
If a slowly recurrent rational map $f \colon \hat{\mathbb{C}} \to \hat{\mathbb{C}}$ of degree $\geq 2$ satisfies CE2 then it satisfies CE.
\end{Lemma}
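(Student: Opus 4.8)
The plan is to mirror the proof of the previous lemma, transferring the sole obstruction in (\ref{CE2CE}) onto the slow recurrence condition. I assume we are in the only problematic situation, namely that the reversed telescope starts with a leading string of first reversed type orbits, so that the expansion estimate (\ref{CE2CE}) carries the extra factor $(\operatorname{diam}(U_m))^{\mu_{\max}-\mu(c)}$. This factor is harmless exactly when $\mu(c) = \mu_{\max}$, which is the case treated in \cite{GS}; the entire task is therefore to control it when $\mu(c) < \mu_{\max}$. As in the first lemma, the goal is to produce a lower bound $\operatorname{diam}(U_m) \geq C' e^{-\alpha' n}$ whose rate $\alpha'$ is a fixed multiple of the slow recurrence parameter $\alpha$, and hence can be made arbitrarily small.

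First I would extract the relevant scale from the first reversed type conditions. By condition 2) of that definition, the pulled-back disk has scale $r$ with $\operatorname{dist}(z_{-1},c') = r/2$ for a critical point $c'$, where the centre $z_{-1} = f^n(c)$ is a point on the forward orbit of the critical point $c$ itself. Since this orbit point has index $\leq n$, applying SR to $c$ gives
\[
\tfrac{r}{2} = \operatorname{dist}(f^n(c),c') \geq \operatorname{dist}\left(f^n(c),\operatorname{Crit}'(f)\right) \geq C e^{-\alpha n},
\]
so that $r \geq 2C e^{-\alpha n}$. Next I would propagate this lower bound on the scale down the telescope to the final neighbourhood $U_m$ around $c$. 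Because $U_m$ is a pullback of a disk of radius comparable to $r$ under an iterate of $f$ whose only critical point met at the last step is $c$, of fixed local degree $\mu(c)+1$, the bounded distortion of the shrinking neighbourhoods yields $\operatorname{diam}(U_m) \geq C' e^{-\alpha' n}$ with $\alpha'$ a fixed multiple of $\alpha$. Substituting into (\ref{CE2CE}),
\[
\vert (f^n)^\prime(f(c))\vert \geq \operatorname{const} \lambda^n \left(C' e^{-\alpha' n}\right)^{\mu_{\max}-\mu(c)} \geq C_1 \lambda_1^n,
\]
and, exactly as in the first lemma, $\lambda_1$ can be made arbitrarily close to $\lambda$ by taking $\alpha$ (hence $\alpha'$) small enough that $\alpha'(\mu_{\max}-\mu(c)) < \log\lambda$, at the cost of shrinking $C_1$.

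The step I expect to be the main obstacle is precisely this transfer from the scale $r$ to a lower bound on $\operatorname{diam}(U_m)$. In the first lemma the quantity to be bounded, $r_1$, was literally the radius of a disk centred at $c$ into which a critical orbit point fell, so SR applied directly and immediately. Here, by contrast, $U_m$ is a pulled-back neighbourhood, and its diameter reflects the accumulated contraction along the reversed orbit rather than just the top scale; one therefore has to lean on the distortion and expansion estimates underlying the reversed telescope construction in \cite{GS} to guarantee that $\operatorname{diam}(U_m)$ degrades at an exponential rate that is a controllable multiple of $\alpha$. Granting those estimates, the conclusion follows verbatim as above.
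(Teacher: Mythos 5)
Your strategy coincides with the paper's: use condition 2) of the reversed first type definition together with SR to bound the top scale from below, $r \geq 2Ce^{-\alpha n}$, then convert this into a lower bound on $\operatorname{diam}(U_m)$ and feed it into (\ref{CE2CE}). However, the step you yourself flag as the main obstacle --- the transfer from $r$ to $\operatorname{diam}(U_m)$ --- is a genuine gap as you have left it, and ``bounded distortion of the shrinking neighbourhoods'' cannot close it. What distortion actually gives (Koebe's $\frac14$-lemma, applicable because the reversed type 1 conditions make $f^{-(m-1)} \colon B(f^m(c),r/2) \to f(U_m)$ univalent) is
\[
\operatorname{diam}(U_m) \geq \operatorname{diam}(f(U_m)) \geq \frac{1}{C}\, r\, \vert (f^{m-1})^\prime(f(c))\vert^{-1},
\]
the first inequality holding because $c \in U_m$, so the final pullback through the critical point only enlarges. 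The transfer therefore requires an \emph{upper} bound on the forward derivative $\vert (f^{m-1})^\prime(f(c))\vert$, and nothing in your argument supplies one: a priori the length $m$ of the final block could be comparable to $n$, and the derivative as large as $(\sup \vert f^\prime \vert)^{m-1}$, so $\operatorname{diam}(U_m)$ could decay at an exponential rate that has nothing to do with $\alpha$ and cannot be tamed by shrinking $\alpha$. Note that CE2 points the wrong way here: it bounds pullback diameters from above (via expansion along backward orbits), never from below.

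The paper closes exactly this hole with the one observation your proposal is missing: $m$ is not an arbitrary time but the \emph{first} time the forward orbit of $c$ enters the $R$-neighbourhood of a critical point, $m = m(c,R)$, a number depending only on $c$ and the fixed scale $R$, not on $n$. Consequently
\[
\vert (f^{m-1})^\prime(f(c))\vert \leq K, \qquad K = \max_{c \in \operatorname{Crit}'(f)} \vert (f^{m(c,R)-1})^\prime (f(c)) \vert,
\]
a constant independent of $n$, and the Koebe bound becomes $\operatorname{diam}(U_m) \geq \operatorname{const}\, r \geq \operatorname{const}\, e^{-\alpha n}$, after which your final computation (the loss $\alpha(\mu_{\max}-\mu(c))$ beaten by $\log \lambda$ for $\alpha$ small) goes through verbatim. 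So your outline follows the paper's route, but the single concrete idea that makes the key step true --- uniform boundedness of $m$, hence of the derivative along the final block --- is absent, and without it the proof does not stand.
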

\begin{proof}
It is given that $f^m(c) \in B(c',R)$ for a critical point $c'$, and $U_m$ is the shrinking neighbourhood of $B(f^m(c),r)$ of radius $r = 2\operatorname{dist}(f^m(c),c')$. By definition of a reversed type 1 orbit $f^{-(m-1)} \colon B(f^m(c),r/2) \to f(U_m)$ is univalent, and with an application of Koebe's~$\frac{1}{4}$-lemma we find that
\[
\operatorname{diam}(U_m) \geq \operatorname{diam}(f(U_m)) \geq \frac{1}{C}r\vert (f^{m-1})^\prime (f(c))\vert^{-1}.
\]
(Here $C > 1$ is a constant depending on the scale $R$ we are working with, and it shows up since we are adapting Koebe's~$\frac{1}{4}$-lemma to the spherical metric.) The first inequality follows since $c \in U_m$ and thus the image of $U_m$ under $f$ is contracted. Since $r/2 = \operatorname{dist}(f^m(c),c')$, invoking SR and that $m \leq n$, we find by inserting the above in (\ref{CE2CE}) that
\[
\vert (f^n)^\prime(f(c))\vert \geq \operatorname{const}\left[\frac{1}{C} \vert (f^{m-1})^\prime (f(c))\vert^{-1}\right]^{\mu_{\max} - \mu(c)} \lambda^n \left(Ce^{-\alpha n}\right)^{\mu_{\max} - \mu(c)}.
\]
We observe that $\vert (f^{m-1})^\prime(f(c)) \vert \leq K$ with $K = K(R)$ an absolute constant depending on the choice of $R$. Indeed, for each critical point $c$ under consideration, and for a fixed $R$, there exists a unique smallest integer $m = m(c,R)$ for which $f^m(c) \in B(c'',R)$, for some critical point $c''$. We simply let 
\[
K = \max_{c \in \operatorname{Crit}'(f)} \vert (f^{m(c,R)-1})^\prime (f(c)) \vert.
\]
Thus we get that
\[
\vert (f^n)^\prime(f(c))\vert \geq C_1 \lambda_1^n,
\]
where we can make $\lambda_1$ arbitrarily close to $\lambda$ by decreasing $\alpha$ (and thus also decreasing $C_1$).
\end{proof}

\section{Equivalence of CE+SR and TCE+SR}
The \emph{topological Collet--Eckmann condition} for rational maps on the Riemann sphere was first introduced in \cite{PR1} and is defined as follows. Recall that for a connected set $\Omega$, $\operatorname{Comp}_w g^{-1}(\Omega)$ denotes the connected component of $g^{-1}(\Omega)$ containing $w$.
\begin{Def}
A rational map $f \colon \hat{\mathbb{C}} \to \hat{\mathbb{C}}$ of degree $\geq 2$ is said to satisfy the \emph{topological Collet--Eckmann condition} (TCE) if there exist $M \geq 0$, $P \geq 0$ and $r > 0$ such that for every $z \in J(f)$ there exists a strictly increasing sequence of positive integers $n_j$, for $j = 1,2, \dots$, such that $n_j \leq P \cdot j$, and for each $j$
\[
\# \{i : 0 \leq k < n_j, \operatorname{Comp}_{f^k(z)} f^{-(n_j-k)} \left(B(f^{n_j}(z),r)\right) \cap \operatorname{Crit} \neq \emptyset\} \leq M.
\]
\end{Def}
Since TCE is formulated purely in topological terms it is a topological invariant. One of the useful properties of this condition is its many equivalent formulations (see \cite{PR-LS} and also \cite{PR-L,R-L10}). Here we make use of the following equivalent condition.
\begin{Def}
A rational map $f \colon \hat{\mathbb{C}} \to \hat{\mathbb{C}}$ of degree $\geq 2$ is said to satisfy \emph{exponential shrinking of components} (ExpShrink) if there exists $\lambda_{\operatorname{Exp}} > 1$ and $r_{\operatorname{Exp}} > 0$ such that for every $x \in J(f)$, every $n > 0$, and every connected component $W$ of $f^{-n}(B(x,r_{\operatorname{Exp}}))$
\[
\diam(W) \leq (\lambda_{\operatorname{Exp}}^{-1})^n.
\]
\end{Def}
It was first proved in \cite{PR1} that CE implies TCE, and in \cite{PR2} it was proved that under the assumption that for every $c \in \operatorname{Crit}'(f)$ whose forward trajectory does not meet any other critical point 
\begin{equation}\label{nonrecurrent}
\operatorname{cl}\bigcup_{n > 0} f^n(c) \cap \left(\operatorname{Crit}(f)\smallsetminus \{c\}\right) = \emptyset,
\end{equation}
TCE implies CE. This latter result clearly implies that CE+(\ref{nonrecurrent}) is a topological invariant; in particular CE is a topological invariant in the case of unicritical maps. Another proof of this result was obtained in \cite{P2}. We will effectively replace condition (\ref{nonrecurrent}) with SR, thus proving that TCE+SR implies CE+SR. This constitutes an obvious modification in the proof of Lemma~4.5 in \cite{P2}; for completeness we give a sketch of this proof. (See also Proposition~3.4 in \cite{Li}.)
\begin{Lemma}
If a slowly recurrent rational map $f \colon \hat{\mathbb{C}} \to \hat{\mathbb{C}}$ of degree $\geq 2$ satisfies ExpShrink then it satisfies CE.
\end{Lemma}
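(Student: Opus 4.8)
The plan is to deduce the forward expansion required by CE from the exponential shrinking of pullbacks, using the Koebe distortion theorem as the bridge between diameters and derivatives, and to let slow recurrence control the only obstruction, namely critical points lying in the pullbacks. Fix $c \in \operatorname{Crit}'(f)$ and $n \geq 1$, set $x = f^{n+1}(c) \in J(f)$, and consider the component $W = \operatorname{Comp}_{f(c)} f^{-n}\bigl(B(x,r_{\operatorname{Exp}})\bigr)$. ExpShrink gives immediately $\operatorname{diam}(W) \leq \lambda_{\operatorname{Exp}}^{-n}$. If $f^n \colon W \to B(x,r_{\operatorname{Exp}})$ happens to be univalent, then applying Koebe's $\tfrac{1}{4}$-lemma (adapted to the spherical metric, with a scale-dependent constant $C$, exactly as in the proof of the previous lemma) to the inverse branch yields $\operatorname{diam}(W) \geq \tfrac{1}{C} r_{\operatorname{Exp}} \vert (f^n)'(f(c))\vert^{-1}$. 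Combining the two estimates gives $\vert (f^n)'(f(c))\vert \geq \tfrac{r_{\operatorname{Exp}}}{C}\lambda_{\operatorname{Exp}}^{\,n}$, which is CE with $\lambda_1 = \lambda_{\operatorname{Exp}}$. Hence the entire difficulty is confined to the pullbacks that meet $\operatorname{Crit}(f)$.

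To treat these I would telescope along the orbit of $f(c)$, breaking it at the close returns to critical points. Fix the scale $R$ and let $0 \leq j_1 < \cdots < j_s \leq n-1$ be the iterates for which $f^{j_i+1}(c)$ lies within $R$ of a critical point $c^{(i)}$; between consecutive returns the orbit stays $R$-away from $\operatorname{Crit}'(f)$, where $\vert f'\vert$ is bounded below and the intermediate pullback components (which are exponentially small by ExpShrink) avoid $\operatorname{Crit}(f)$ and are univalent. On each such block ExpShrink and Koebe therefore furnish honest exponential expansion; this is precisely the $R$-expansion that in \cite{GS} had to be postulated and that ExpShrink now supplies for free. Following the telescope construction of \cite{GS} recalled above, one then organises these blocks so that every \emph{intermediate} return is compensated by the surrounding expansion, leaving a net factor $\lambda_{\operatorname{Exp}}^{\,n}$ together with a single uncompensated boundary term coming from the relevant critical point $c$.

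Slow recurrence enters exactly at that boundary term. As in the two previous lemmas, the surviving factor is a single power, with exponent $\mu_{\max}-\mu(c)$, of the proximity of the orbit to a critical point at some return time $m \leq n$; by SR this proximity is $\geq Ce^{-\alpha m} \geq Ce^{-\alpha n}$, which is sub-exponential once $\alpha$ is small. Assembling the blockwise estimates I expect to reach an inequality of the same shape as (\ref{CECE2}) and (\ref{CE2CE}), namely
\[
\vert (f^n)'(f(c))\vert \;\geq\; \operatorname{const}\cdot \lambda_{\operatorname{Exp}}^{\,n}\bigl(Ce^{-\alpha n}\bigr)^{\mu_{\max}-\mu(c)},
\]
from which CE follows with $\lambda_1$ arbitrarily close to $\lambda_{\operatorname{Exp}}$ (and $C_1$ correspondingly small) upon taking $\alpha$ small, exactly as before.

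The step I expect to be the main obstacle is the telescope bookkeeping itself: verifying that all intermediate returns are genuinely compensated, so that only one boundary factor survives rather than one loss per return (a naive pointwise use of SR at every return would accumulate a super-exponential loss and fail). The resolution is that ExpShrink forces any approach to a critical point to be preceded by a matching amount of expansion, so the incoming block already ``pays for'' the return. This is precisely the point at which the argument of Lemma~4.5 in \cite{P2} (see also Proposition~3.4 in \cite{Li}) must be modified: where condition (\ref{nonrecurrent}) previously kept the orbit away from all other critical points outright, so that the pullbacks could be kept univalent, the quantitative SR bound now takes over and renders the single resulting proximity factor harmless.
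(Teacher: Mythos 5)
Your first paragraph (the univalent case) and your diagnosis of the obstruction (pullbacks of $B(f^{n+1}(c),r_{\operatorname{Exp}})$ meeting $\operatorname{Crit}(f)$) are correct, but the resolution you propose leaves a genuine gap: the telescope bookkeeping, which you yourself flag as ``the main obstacle,'' is the entire content of the proof in your approach, and you do not carry it out. In \cite{GS} the compensation of intermediate critical passages --- the fact that one loses only a single boundary factor with exponent $\mu_{\max}-\mu(c)$, rather than one loss per return --- is precisely what Propositions~1 and~5 establish, and their proofs are powered by the CE, respectively CE2, hypothesis (derivative growth along critical orbits, respectively at preimages of critical points). You propose to rerun this machinery with ExpShrink as the only expansion input; that is not a modification of \cite{GS} but a reconstruction of it under a different hypothesis, and your one-sentence heuristic (``ExpShrink forces any approach to a critical point to be preceded by a matching amount of expansion'') does not establish it. In particular it does not explain how the loss of univalence at a return to a critical point of multiplicity $\mu$ --- which costs a $\mu$-th root of a diameter, not merely a multiplicative derivative factor --- is absorbed, and there can be linearly many returns at the fixed scale $R$. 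So your argument reduces the lemma to an unproven claim that is at least as hard as the lemma itself.

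The idea you are missing is that, under SR, returns never need to be compensated: they can be made to disappear. This is how the paper argues, following Lemma~4.5 of \cite{P2}. By SR, for $n$ large the whole orbit segment $f^j(f(c))$, $0 \leq j \leq n-1$, stays at distance $> e^{-2\alpha n}$ from $\operatorname{Crit}(f)$. One then does \emph{not} pull back the full ball $B(f^{n+1}(c),r_{\operatorname{Exp}})$ --- its late-time pullbacks have diameter of order $1$ and certainly can swallow critical points --- but instead looks $s \approx 2\alpha n/\log\lambda_{\operatorname{Exp}}$ iterates further ahead and pulls back $B(f^{n+s}(f(c)),r_{\operatorname{Exp}})$: by ExpShrink, every component met along the orbit has diameter at most $\lambda_{\operatorname{Exp}}^{-s-j} \leq \epsilon e^{-2\alpha n}$, i.e.\ strictly smaller than the distance from the orbit to the critical set. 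Consequently the pullbacks of the shrunk ball $B = B(f^n(f(c)),r_{\operatorname{Exp}}e^{-3\alpha Mn})$, which sits inside the $s$-step pullback, avoid $\operatorname{Crit}(f)$ entirely; $f^n$ is then univalent with bounded distortion on $W_n = \operatorname{Comp}_{f(c)}f^{-n}(B)$, and your own Koebe argument from the univalent case applies verbatim, now with a sub-exponential loss $e^{-3\alpha Mn}$ that is absorbed by taking $\alpha$ (and $\epsilon$) small. No telescope, no multiplicity exponents, no per-return losses: SR is used once, to keep the orbit farther from $\operatorname{Crit}(f)$ than the size of the neighbourhoods being pulled back.
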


\begin{proof}
Let $\alpha$ be the exponent in SR and let $n_0 = n_0(\alpha)$ be large enough such that for every $n \geq n_0$
\[
\operatorname{dist}(f^j(f(c)), \operatorname{Crit}(f)) > e^{-2\alpha n} \quad (j = 0,1,\dots, n-1).
\]
This condition is assumed in Lemma~4.5 \cite{P2}, and the proof now continues as follows. Fix $\epsilon > 0$ arbitrary and let 
\[
s = \left[ \frac{-\log \epsilon}{\log \lambda_{\operatorname{Exp}}} + \frac{2\alpha n}{\log \lambda_{\operatorname{Exp}}} \right] + 1,
\]
where $\left[x\right]$ denotes the integral part of $x$. By ExpShrink we have that for all $0 \leq j \leq n$
\begin{align*}
\operatorname{diam}\left( \operatorname{Comp}_{f^{n-j}(f(c))} f^{-s-j}\left(B(f^{n+s}(f(c)),r_{\operatorname{Exp}})\right) \right) &\leq (\lambda_{\operatorname{Exp}}^{-1})^{s+j} \\
&\leq (\lambda_{\operatorname{Exp}}^{-1})^{s} \\
&\leq \epsilon e^{-2\alpha n}.
\end{align*}
Let $B = B(f^n(f(c)),r_{\operatorname{Exp}} e^{-3\alpha M n})$, where
\[
M = \left[\frac{\log \sup_{\hat{\mathbb{C}}}\vert f^\prime \vert}{\log \lambda_{\operatorname{Exp}}} \right] + 1.
\]
Then for $n$ large enough we get that
\[
B \subset \operatorname{Comp}_{f^n(f(c))} f^{-s}\left(B(f^{n+s}(f(c)), r_{\operatorname{Exp}}) \right).
\]
Let $W_n = \operatorname{Comp}_{f(c)} f^{-n}(B)$. Then there exists $w \in W_n$ such that
\[
\vert (f^n)^\prime(w) \vert \geq \frac{\operatorname{diam} B}{\operatorname{diam} W_n} \geq (2r_{\operatorname{Exp}} e^{-3\alpha M n})\lambda_{\operatorname{Exp}}^n.
\]
If $\epsilon$ is sufficiently small we have distortion and can switch from $w$ to $f(c)$, hence
\[
\vert (f^n)^\prime(f(c)) \vert \geq \lambda_1^n,
\]
where we can make $\lambda_1$ arbitrarily close to $\lambda_{\operatorname{Exp}}$ by decreasing $\alpha$ and $\epsilon$ (and thus increasing $n_0$).
\end{proof}

\section{Topological invariance}
We finish by giving the short proof of the topological invariance, as outlined in Theorem~A.1 \cite{Li}.
\begin{Lemma}
Let $f$ and $g$ be topologically conjugated rational maps on the Riemann sphere of degree $\geq 2$. If $f$ satisfies TCE+SR then so does $g$.
\end{Lemma}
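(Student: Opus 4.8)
The plan is to exploit the fact, established in Proposition~\ref{mainProp}, that within the slowly recurrent family the three conditions TCE, CE, and CE2 all coincide, combined with the observation that TCE by itself is a topological invariant. Since TCE is formulated purely in terms of the combinatorics of preimage components meeting the critical set, a topological conjugacy $h$ with $h \circ f = g \circ h$ carries the TCE structure of $f$ directly over to $g$. Thus the only content of the statement is to show that the \emph{slow recurrence} condition also transfers, and here the argument is not purely topological: SR is a metric condition, so one cannot simply push it through $h$ verbatim.

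First I would record that TCE transfers. The conjugacy $h$ is a homeomorphism of $\hat{\mathbb{C}}$ mapping $J(f)$ onto $J(g)$ and $\operatorname{Crit}(f)$-related dynamical data onto that of $g$; because TCE only refers to which preimage components hit the critical set and not to any metric quantity, the defining inequality on $\#\{i : \dots\}$ is preserved (after adjusting the radius $r$ using uniform continuity of $h$ and $h^{-1}$). So if $f$ satisfies TCE then so does $g$. The genuinely new step is to establish that $g$ satisfies SR. My plan here is \emph{not} to transfer SR directly, but to route through the equivalences of the previous sections: since $f$ satisfies TCE+SR, by the three lemmas already proved $f$ satisfies CE (with SR). Then I would invoke the invariance of CE+SR under topological conjugacy, which the introduction attributes to H.~Li (Theorem~A.1 in \cite{Li}); this gives that $g$ satisfies CE+SR. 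In particular $g$ is slowly recurrent, and being CE it satisfies TCE, so $g$ satisfies TCE+SR as desired.

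The cleanest route, and the one I would actually write, is therefore the chain
\[
f \text{ is TCE+SR} \implies f \text{ is CE+SR} \implies g \text{ is CE+SR} \implies g \text{ is TCE+SR},
\]
where the first implication is Proposition~\ref{mainProp} applied to $f$, the middle implication is the topological invariance of CE+SR, and the last is again Proposition~\ref{mainProp} applied to $g$ (noting that SR for $g$ was obtained as part of the middle step, so the equivalence of the Collet--Eckmann conditions is available for $g$ too).

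The main obstacle is precisely the transfer of slow recurrence, and the honest way to handle it is to lean on the CE+SR invariance rather than attempt a direct metric argument. A naive attempt to push the bound $\operatorname{dist}(f^n(c),\operatorname{Crit}'(f)) \geq Ce^{-\alpha n}$ through $h$ fails because a homeomorphism of $\hat{\mathbb{C}}$ need not be bi-Lipschitz, so it can badly distort the exponential rate and one loses the clean exponent $\alpha$. This is why the proof must use the conjugacy invariance of CE+SR as a black box (the short proof of which is the subject of the final displayed argument attributed to \cite{Li}), whence SR for $g$ comes for free together with CE, and TCE+SR for $g$ then follows immediately from the equivalences.
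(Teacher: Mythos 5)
Your proposed chain has a circularity problem: its middle implication --- that CE+SR is invariant under topological conjugacy --- is precisely the content of this lemma. The introduction states that ``the short proof of the invariance is given at the end of this note,'' i.e.\ the present lemma \emph{is} the paper's proof of Theorem~A.1 of \cite{Li}; and since by the preceding lemmas CE, CE2, and TCE coincide for slowly recurrent maps, the invariance of CE+SR and the invariance of TCE+SR are the same statement. Invoking Li's theorem as a black box therefore reduces the lemma to itself and supplies no argument for the one step you yourself single out as ``the genuinely new step,'' namely that SR transfers to $g$. (The first and last links of your chain are fine and match the paper: $f$ TCE+SR gives $f$ CE+SR by the equivalences, and CE implies TCE.)

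Moreover, your diagnosis that a direct metric transfer of SR ``fails because a homeomorphism of $\hat{\mathbb{C}}$ need not be bi-Lipschitz'' is mistaken, and it is exactly this direct transfer that constitutes the actual proof. The missing idea is rigidity: since $f$ is TCE+SR it is CE+SR, and by Theorem~A in \cite{PR2} the topological conjugacy $h$ is then quasi-conformal, hence bi-H\"older with some constant $A>0$ and exponent $B>0$. Bi-H\"older is weaker than bi-Lipschitz, but that does not matter here, because SR is quantified over \emph{every} $\alpha>0$ with a constant allowed to depend on $\alpha$: from $A\,\dist\left(g^n(c_1'),c_2'\right)^B \geq \dist\left(f^n(c_1),c_2\right) \geq Ce^{-\alpha n}$ one gets $\dist\left(g^n(c_1'),c_2'\right) \geq (C/A)^{1/B}e^{-(\alpha/B)n}$, and replacing $\alpha$ by $\alpha/B$ is harmless since $\alpha$ ranges over all positive numbers. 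Together with the purely topological invariance of TCE (which you handled correctly), this yields TCE+SR for $g$. So the correct write-up is: TCE+SR for $f$ gives CE for $f$; CE upgrades $h$ to quasi-conformal and hence bi-H\"older; push SR through the H\"older estimate; conclude TCE+SR for $g$.
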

\begin{proof}
Since $f$ is TCE+SR it is CE+SR and therefore, by Theorem~A in \cite{PR2}, the conjugacy is quasi-conformal and therefore bi-H\"older. Let $h$ denote this conjugacy, and let $A > 0$ and $B > 0$ be the associated constant and exponent from the H\"older condition, respectively. Let $c_1'$ and $c_2'$ be distinct critical points of $g$; then $c_1 = h^{-1}(c_1')$ and $c_2 = h^{-1}(c_2')$ are distinct critical points of $f$. Since $h$ preserves TCE, $g$ is at least TCE. The fact that $g$ is also SR follows from
\begin{align*}
A \dist\left(g^n(c_1'),c_2'\right)^B &\geq \dist\left(h^{-1}(g^n(c_1')),h^{-1}(c_2')\right) \\
&= \dist\left(f^n(c_1),c_2\right) \\
&\geq C e^{-\alpha n}.
\end{align*}
\end{proof}

\begin{small}
\begin{acknowledgement}
I thank M.~Aspenberg and W.~Cui for discussions.
\end{acknowledgement}
\end{small}
\bibliographystyle{alpha}
\bibliography{references}

\end{document}